\newtheorem{theorem}{Theorem}[section]
\newtheorem{claim}[theorem]{Claim}
\newtheorem{conjecture}[theorem]{Conjecture}
\newtheorem{corollary}[theorem]{Corollary}
\newtheorem{observation}[theorem]{Observation}
\newtheorem{problem}[theorem]{Problem}
\newproof{proof}{Proof}
\begin{document}
\title
{On decomposing graphs of large minimum degree into locally irregular subgraphs}

\author{Jakub Przyby{\l}o\fnref{grantJP,fn1}}
\ead{jakubprz@agh.edu.pl, phone: 048-12-617-46-38,  fax: 048-12-617-31-65}
\fntext[grantJP]{Supported by the National Science Centre, Poland, grant no. 2014/13/B/ST1/01855.}
\fntext[fn1]{Partly supported by the Polish Ministry of Science and Higher Education.}
\address{AGH University of Science and Technology, al. A. Mickiewicza 30, 30-059 Krakow, Poland}

\begin{abstract}
A \emph{locally irregular graph} is a graph whose adjacent vertices have distinct degrees.
We say that a graph $G$ \emph{can be decomposed into} $k$ \emph{locally irregular subgraphs} if
its edge set may be partitioned into $k$ subsets each of which induces a locally irregular subgraph in $G$.
It has been conjectured that apart from the family of exceptions which admit no such decompositions,
i.e., odd paths, odd cycles and a special class of graphs of maximum degree $3$,
every connected graph can be decomposed into $3$ locally irregular subgraphs.
Using a combination of a probabilistic approach and some known theorems on degree constrained subgraphs of a given graph,
we prove this to hold for graphs of sufficiently large minimum degree, $\delta(G)\geq 10^{10}$.
This problem is strongly related to edge colourings distinguishing neighbours
by the pallets of their incident colours and to 1-2-3 Conjecture.
In particular, the contribution of this paper constitutes a strengthening of
a result of Addario-Berry, Aldred,
Dalal and Reed [J. Combin. Theory Ser. B 94 (2005) 237-244].
\end{abstract}

\begin{keyword}
locally irregular graph \sep graph decomposition \sep edge set partition \sep 1-2-3 Conjecture
\MSC{05C78, 05C15}
\end{keyword}

\maketitle

\section{Introduction}
All graphs considered are simple and finite.
We follow~\cite{Diestel} for the notations and terminology not defined here.
A \emph{locally irregular} graph is a graph
whose every vertex
has the degree distinct from the degrees of all of its neighbours.
In other words, it is a graph in which the adjacent vertices have distinct degrees.
Motivated by a few well known problems in
edge colourings and labelings,
we investigate a (non-evidently)  related concept of decompositions of graphs into
locally irregular subgraphs.
More precisely, we say that a graph $G=(V,E)$ can be \emph{decomposed into} $k$ \emph{locally irregular subgraphs} if
its edge set may be partitioned into $k$ subsets each of which induces a locally irregular subgraph, i.e.,
$E=E_1\cup E_2\cup\ldots\cup E_k$ with $E_i\cap E_j=\emptyset$ for $i\neq j$ and $H_i:=(V,E_i)$ is locally irregular for $i=1,2,\ldots,k$.
Naturally, instead of decomposing the graph $G$, we may alterably
paint its edges with $k$ colours, say $1,2,\ldots,k$,
so that every colour class induces a locally irregular subgraph in $G$.
Such colouring is called a \emph{locally irregular} $k$-\emph{edge colouring}
of $G$.
Equivalently it is just an edge colouring such that if
an edge $uv\in E$ has colour $i\in \{1,2,\ldots,k\}$ assigned under it,
then the numbers of edges coloured with $i$ incident with $u$ and $v$ must be distinct.
As a weakening of such property, we may require $u$ and $v$ to
to differ in frequencies of any of the colours from $\{1,2,\ldots,k\}$,
not specifically the colour $i$ (which is assigned to $uv$).
In other words, we may wish the adjacent vertices to have distinct multisets of their incident colours under a colouring $c:E\to \{1,2,\ldots,k\}$.
We call such $c$ a \emph{neighbour multiset distinguishing} $k$-\emph{edge colouring} then.
Such variant of edge colourings has in fact already been investigated by
Addario-Berry et al. in~\cite{Louigi2}.
They proved that $4$ colours are always sufficient to construct such a colouring for every graph containing no isolated edges,
and provided the following improvement.
\begin{theorem}\label{multisets_th_2}
There exists a neighbour multiset distinguishing $3$-edge colouring of every graph $G$
of minimum degree $\delta\geq 10^3$.
\end{theorem}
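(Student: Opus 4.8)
The first observation is a free reduction: if $uv\in E(G)$ with $d_G(u)\neq d_G(v)$, then under any $3$-edge-colouring the multiset of colours at $u$ has a different cardinality from that at $v$, so such a pair is automatically distinguished. Hence it suffices to produce a colouring that distinguishes the adjacent pairs of the spanning subgraph $G'$ whose edges join two vertices of equal $G$-degree. Encoding a colouring $c\colon E(G)\to\{1,2,3\}$ at $v$ by the profile $\vec c(v)=(c_1(v),c_2(v),c_3(v))$, where $c_i(v)$ counts the $i$-coloured edges at $v$ and $c_1(v)+c_2(v)+c_3(v)=d_G(v)$, the goal is to make the sorted triples of $\vec c(u)$ and $\vec c(v)$ differ for every edge $uv$ of $G'$.

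I would build the colouring in two stages. First, a deterministic skeleton: invoke a degree-constrained subgraph theorem of the type due to Addario-Berry, Dalal and Reed -- asserting that a fractionally realisable (in particular a roughly halving) target degree function is realised by an integral spanning subgraph up to an additive absolute-constant error, and, in a sharper form, with the degree at each vertex of freely prescribed parity. This fixes two edge-disjoint subgraphs that will form most of colour classes $1$ and $2$, so that $c_1(v)$ and $c_2(v)$ are controlled to within $O(1)$ of chosen targets near $d_G(v)/2$, with prescribable parities; the parities serve as the deterministic handle, and choosing them according to a suitable bounded colouring of $G'$ pre-separates the profiles along all but a sparse family of edges $G''\subseteq G'$. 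Second, a random stage: colour the remaining free edges by independent fair coins (using colours $2$ and $3$), so that $c_2(v)$ becomes a sum of $\Theta(d_G(v))$ independent Bernoulli variables; by a local limit / Littlewood--Offord anticoncentration bound it is spread over $\Omega(\sqrt{d_G(v)})$ values, and since the colourings at the two ends of an edge $uv$ overlap only in $uv$ itself, the event $B_{uv}$ that $u$ and $v$ receive equal sorted profiles has $\Pr[B_{uv}]=O(\delta^{-1/2})$, after summing over the $O(1)$ ways a multiset coincidence can be aligned.

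The final step is the Lov\'{a}sz Local Lemma applied to the events $B_{uv}$ with $uv\in G''$, and this is the crux. A priori $B_{uv}$ is dependent on the $\Theta(\Delta(G)^2)$ events $B_{xy}$ whose endpoints lie within $G$-distance one of $\{u,v\}$, so the condition $e\Pr[B_{uv}](D+1)\le 1$, with $D$ the dependency, cannot be met for $\delta$ of order $10^3$ unless the skeleton stage has cut $D$ down to $O(1)$. Securing that -- by making the skeleton stage simultaneously (a) thin $G'$ to a bounded-degree $G''$ and (b) confine the surviving randomness to a sparse edge set, all while keeping the degree-constrained subgraph theorem applicable -- is the step I expect to be the main obstacle, and it is precisely the mutual balancing of (a), (b) and the anticoncentration estimate that forces a concrete and necessarily fairly large lower bound on $\delta$. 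Once the Local Lemma applies, a colouring avoiding every $B_{uv}$ distinguishes all equal-degree adjacent pairs, and with the reduction of the first paragraph this is the required neighbour multiset distinguishing $3$-edge colouring of $G$.
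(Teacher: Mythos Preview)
First a framing remark: the paper does not itself prove Theorem~\ref{multisets_th_2}; it quotes it from Addario-Berry, Aldred, Dalal and Reed and then proves the stronger locally irregular decomposition (Theorem~\ref{main_Th_general_graphs}). So the relevant comparison is with that argument, whose architecture is also the one used in the cited source.

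Your proposal contains a genuine gap, and it is exactly the one you flag but do not close. With edge-randomness the bad event $B_{uv}$ depends on every edge incident with $u$ or $v$, so $B_{uv}$ is correlated with every $B_{xy}$ for which $\{x,y\}$ meets $N[u]\cup N[v]$. Even after restricting to equal-degree pairs, a single neighbour $w$ of $u$ of huge degree contributes $d(w)$ many dependent events, so the dependency is not controlled by $d(u)=d(v)$ at all; the symmetric Local Lemma cannot be fed an $O(\delta^{-1/2})$ probability against that. Your two proposed remedies do not help: (a) parity supplies one bit per vertex, so a vertex with three or more equal-degree neighbours cannot be pre-separated from all of them, and $G''$ need not have bounded degree; (b) confining the randomness to a sparse edge set destroys the very anticoncentration you need, since the Littlewood--Offord bound requires $\Theta(d)$ independent summands at each endpoint. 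In short, the order ``deterministic skeleton first, then random edges, then LLL on edge events'' cannot be made to balance.

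The approach that actually works reverses the roles of randomness and the degree theorem. One randomly assigns to each vertex $v$ a \emph{target} residue $c(v)$ in a range of size roughly $d(v)^{\alpha}$ (in the paper, $2^{\lceil\log_\beta d(v)\rceil}$), calls an edge \emph{risky} if the two targets collide in the relevant modular sense, and applies the Local Lemma to the \emph{vertex} events ``$v$ has too many risky neighbours''. These events have probability bounded via Chernoff by $e^{-\Omega(d(v)^{\gamma})}$, which comfortably beats the polynomial dependency $O(d(v)^2)$ coming from neighbours-of-neighbours of comparable degree; vertices of very different degree are automatically safe and drop out of the dependency digraph. Only \emph{after} the targets are fixed does one invoke the degree-constrained subgraph theorem (Corollary~\ref{1_6Lemma}) to realise, for each colour class, degrees congruent to the chosen targets modulo the chosen moduli. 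The few remaining risky edges are absorbed by a secondary layer of the same mechanism. This is the architecture of Section~3 of the paper (and of the Addario-Berry et al.\ proof); your sketch has the two ingredients but in the wrong order, and that ordering is what makes the Local Lemma go through.
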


Their research was motivated by the so called 1-2-3 Conjecture due to Karo\'nski, {\L}uczak and Thomason~\cite{123KLT},
yet another concept introducing `local irregularity' into a graph.
Let $c:E\to\{1,2,\ldots,k\}$ be an edge colouring of $G$ with positive integers.
For every vertex $v$ we
denote by $s_c(v):=\sum_{u\in N(v)}c(uv)$
the sum of its incident colours and call it the \emph{weighted degree} of $v$.
We say that $c$ is a \emph{neighbour sum distinguishing} $k$-\emph{edge colouring} of $G$
if $s_c(u)\neq s_c(v)$
for all adjacent vertices $u,v$ in $G$.
Equivalently, instead of assigning integers from $\{1,2,\ldots,k\}$ to the edges, one might strive to
multiply them the corresponding numbers of times
in order to create a \emph{locally irregular multigraph} of $G$, i.e., a multigraph
in which the adjacent vertices have distinct degrees.
In~\cite{123KLT} Karo\'nski et al.
posed the following elegant problem.
\begin{conjecture}[\textbf{1-2-3 Conjecture}]\label{123Conjecture}
There exists a neighbour sum distinguishing $3$-edge colouring of every graph $G$ containing no isolated edges.
\end{conjecture}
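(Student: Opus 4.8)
The plan is to recast the existence of a neighbour sum distinguishing $3$-edge colouring as a non-vanishing statement for a graph polynomial and to attack it through Alon's Combinatorial Nullstellensatz. To each edge $e$ I associate a variable $x_e$ to be chosen from $\{1,2,3\}$; a colouring $c$ is then an assignment of these variables and $s_c(v)=\sum_{e\ni v}x_e$. For an edge $uv$ the demand $s_c(u)\neq s_c(v)$ is exactly the non-vanishing of the linear form $L_{uv}:=\sum_{e\ni u}x_e-\sum_{e\ni v}x_e$ (the variable $x_{uv}$ cancels), so a desired colouring exists if and only if the polynomial $P:=\prod_{uv\in E}L_{uv}$ takes a nonzero value somewhere on $\{1,2,3\}^{E}$. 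Before working with $P$ I would dispose of the trivial and sporadic small cases (the excluded isolated edges, together with configurations such as triangles or short paths settled by hand) and reduce, as far as the sum-distinguishing property permits, to connected graphs of minimum degree at least two, colouring pendant edges first so that the remaining forms $L_{uv}$ are genuinely in competition.

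The core of the argument is then to apply the Combinatorial Nullstellensatz to $P$. Since each variable ranges over a set of size three, it suffices to exhibit a monomial $\prod_{e\in E}x_e^{\,d_e}$ with all exponents $d_e\le 2$, total degree equal to $\deg P=|E|$, and nonzero coefficient in $P$. Expanding the product, the coefficient of such a monomial is a signed count of the ways to select, from each factor $L_{uv}$, one of the edge-variables occurring in it so that every edge is selected at most twice in total; this is an orientation- and subgraph-counting quantity closely tied to the permanent of the associated signed incidence matrix. The entire strategy thus reduces to proving that, for every graph left after the reduction, at least one such admissible monomial survives with a nonzero coefficient.

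The hard part is precisely this last non-vanishing assertion, and it is here that I expect the approach to stall in full generality. The per-variable bound $d_e\le 2$ forced by the use of only three colours is extremely tight: it leaves almost no slack to absorb cancellations, and no general mechanism is known to guarantee a surviving monomial for an arbitrary graph. Indeed the best unconditional results obtained along this and neighbouring routes fall short of three colours---the greedy amortisation of Kalkowski, Karo\'nski and Pfender delivers the palette $\{1,2,3,4,5\}$, whereas purely algebraic arguments succeed only for restricted families (for instance $3$-colourable graphs, handled by Karo\'nski, {\L}uczak and Thomason). Bridging the gap from five colours down to three for all graphs without isolated edges is exactly the obstruction that keeps Conjecture~\ref{123Conjecture} open; a full proof would demand either a sharper handle on the coefficients of $P$ or a genuinely new amortisation within the greedy framework, and I would regard this as the true crux of the matter rather than a routine computation.
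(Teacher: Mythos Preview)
The statement you were asked to prove is Conjecture~\ref{123Conjecture}, and the paper does not prove it: it is quoted as an open problem due to Karo\'nski, {\L}uczak and Thomason, with the best known bound of $5$ colours attributed to~\cite{KalKarPf_123}. There is therefore no ``paper's own proof'' to compare against.

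Your proposal is not a proof either, and you say so yourself. You set up the Combinatorial Nullstellensatz reduction correctly---the polynomial $P=\prod_{uv}L_{uv}$, the per-variable degree cap $d_e\le 2$ forced by the palette $\{1,2,3\}$, and the interpretation of the relevant coefficient as a signed selection count are all standard and accurate. But the entire argument hinges on the existence of a monomial of the right shape with nonzero coefficient, and you explicitly concede that you do not know how to establish this in general. That is not a minor technical gap: it is precisely the content of the conjecture. Everything preceding it is reformulation, not progress. So what you have written is a fair survey of one known line of attack and its limitations, but it should not be labelled a proof proposal; it is an explanation of why the problem is hard.
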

Thus far it is known that a neighbour sum distinguishing $5$-edge colouring exists
for every graph without isolated edges, see~\cite{KalKarPf_123}.
In fact our interest in locally irregular graphs originated from 1-2-3 Conjecture via the following easy observation from~\cite{LocalIrreg_1}.
\begin{observation}\label{equivEdge_2}
If $G$ is a regular graph, then there exists a neighbour sum distinguishing $2$-edge colouring of $G$
if and only if it can be decomposed into $2$ locally irregular subgraphs.
\end{observation}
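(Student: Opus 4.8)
The plan is to exploit the rigidity that regularity imposes on any $2$-edge colouring. Write $r$ for the common degree, so $G$ is $r$-regular, and let $c:E\to\{1,2\}$ be any edge colouring. For a vertex $v$ let $d_i(v)$ denote the number of edges incident with $v$ that receive colour $i$; then $d_1(v)+d_2(v)=r$ for every $v$. The first step is the elementary identity
\[
s_c(v)\;=\;d_1(v)+2\,d_2(v)\;=\;r+d_2(v),
\]
from which $s_c(u)\neq s_c(v)$ is equivalent to $d_2(u)\neq d_2(v)$, and, since $d_1=r-d_2$ pointwise, equally equivalent to $d_1(u)\neq d_1(v)$. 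This is the only place where regularity genuinely enters, and it is the heart of the whole matter.

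The second step is the routine passage between colourings and decompositions: given $c$, set $E_i:=c^{-1}(i)$ and $H_i:=(V,E_i)$; conversely a decomposition $E=E_1\cup E_2$ yields $c$ by putting $c(e)=i$ for $e\in E_i$. The degree of a vertex $v$ in $H_i$ is exactly $d_i(v)$, so $H_i$ is locally irregular precisely when $d_i(u)\neq d_i(v)$ holds for every edge $uv\in E_i$.

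Combining the two steps gives both implications. If $c$ is neighbour sum distinguishing, then by the identity $d_1(u)\neq d_1(v)$ (equivalently $d_2(u)\neq d_2(v)$) for \emph{every} edge $uv\in E$, hence in particular for every edge inside $E_1$ and every edge inside $E_2$; thus $H_1$ and $H_2$ are both locally irregular and $\{E_1,E_2\}$ is the sought decomposition. Conversely, if $G=H_1\cup H_2$ with each $H_i$ locally irregular, take any edge $uv$, say $uv\in E_1$; then $d_1(u)\neq d_1(v)$, whence $s_c(u)\neq s_c(v)$ by the identity, so the associated $c$ is neighbour sum distinguishing. One should also fix the convention that the edgeless graph is locally irregular (or equivalently note that an $r$-regular graph with $r\ge 1$ is never itself locally irregular, so in any nontrivial instance both colour classes are nonempty).

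I do not anticipate a real obstacle: this is a genuine observation, and everything reduces to the weighted-degree identity together with careful bookkeeping for the complementary functions $d_1$ and $d_2$. The one point demanding vigilance is that the equivalence $d_1(u)\neq d_1(v)\iff d_2(u)\neq d_2(v)$ relies on $G$ being regular (not merely on $u$ and $v$ having equal degree), and that the defining condition of local irregularity quantifies only over edges \emph{within} the relevant colour class, which is exactly why no information is lost in translating between the two notions.
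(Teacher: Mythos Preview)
Your argument is correct and is exactly the natural one: the identity $s_c(v)=r+d_2(v)$ collapses the sum condition to a degree condition in each colour class, and regularity makes the two colour classes interchangeable via $d_1=r-d_2$. The paper itself does not supply a proof of this observation; it is quoted from~\cite{LocalIrreg_1} as an easy fact, and your write-up is precisely the intended justification.
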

It is
worth noting in this context that asymptotically almost surely a random $d$-regular graph
can be decomposed into $2$ locally irregular subgraphs for every constant $d>d_0$, where $d_0$ is some large number, see~\cite{LocalIrreg_1}.
In the same paper the authors investigate a special family $\mathfrak{T}$ of graphs of maximum degree (at most) $3$
whose every member might be constructed from a triangle by repeatedly performed the following operation:
choose a triangle with a vertex
of degree $2$ in our constructed graph and append to this vertex either
a hanging path of even length or a hanging path of odd length with a triangle glued to its other end.
They posed the following conjecture.
\begin{conjecture}\label{BBPWConjecture1}
Every connected graph $G$ which
does not belong to $\mathfrak{T}$ and is not an odd length path nor an odd length cycle
can be decomposed
into $3$ locally irregular subgraphs.
\end{conjecture}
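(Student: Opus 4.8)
The plan is to first dispose of the structural reductions and then attack the two genuinely different regimes---graphs of large and of bounded minimum degree---by separate means. To begin, I would invoke the characterisation of decomposable graphs: a connected graph admits \emph{no} decomposition into locally irregular subgraphs precisely when it is an odd path, an odd cycle, or a member of $\mathfrak{T}$, and every other connected graph admits at least a crude decomposition into boundedly many parts. The whole content of the statement is therefore to push the number of parts down to $3$ for all of these decomposable graphs.

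For graphs of large minimum degree, say $\delta(G)\geq 10^{10}$, I would build the three colour classes by a probabilistic construction reinforced by theorems on degree-constrained subgraphs. One assigns each edge at random to one of three classes, then repairs conflicts locally via the Lov\'asz Local Lemma, using the existence of spanning subgraphs whose vertex degrees fall in prescribed intervals to force, at every vertex, a degree in each class that differs from the degrees of its neighbours. Large degrees provide the slack that makes such corrections possible; this is the regime where the machinery behind Theorem~\ref{multisets_th_2} applies most directly, and where a full argument is within reach.

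For graphs of bounded minimum degree I would proceed by structural induction on the number of edges, taking as base cases the classes already known to decompose into at most three locally irregular subgraphs---trees, complete graphs and complete bipartite graphs. The inductive step peels off pendant paths and reduces vertices of degree $1$ and $2$: a hanging path is absorbed into a single colour class, with the parity of its length chosen so that no new monochromatic regularity appears at the attachment vertex. Degree-$2$ vertices on long internal paths are treated similarly, and the real combinatorial labour is concentrated at branch vertices, where several such local reductions must be made simultaneously compatible, respecting Observation~\ref{equivEdge_2} wherever a two-class decomposition can be exploited.

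The hard part---and the reason the conjecture is still open in full---is the graphs of small maximum degree, $3$ or $4$, that sit structurally close to $\mathfrak{T}$, together with graphs that mix very high- and very low-degree vertices so that neither regime's method applies cleanly. In the bounded-degree world the degree sequence is too rigid for probabilistic adjustment, and the peeling reductions can collide with the delicate global parity constraints that define $\mathfrak{T}$. Proving that these three listed families are the \emph{only} obstructions---that every other connected graph of bounded degree genuinely admits a $3$-decomposition---is the crux of the conjecture, and a complete proof would demand a fine structural analysis matching the obstructions in $\mathfrak{T}$ exactly.
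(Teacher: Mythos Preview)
The statement you are trying to prove is a \emph{conjecture}, and the paper does not claim to prove it. Indeed, the concluding remarks explicitly say that Conjecture~\ref{BBPWConjecture1} remains open, and Problems~\ref{JP_Question_1} and~\ref{JP_Question_2} are posed precisely because not even the bipartite case or the existence of \emph{any} uniform finite bound is known. The only contribution of the paper towards this conjecture is Theorem~\ref{main_Th_general_graphs}, which handles the single regime $\delta(G)\geq 10^{10}$.

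Your proposal is therefore not a proof but a programme, and you yourself acknowledge this in the final paragraph. The large-minimum-degree part of your sketch is in the right spirit and roughly matches what the paper actually does, though your description (``assign each edge at random to one of three classes, then repair conflicts'') is not accurate: the paper's argument randomly assigns \emph{vertex labels} $c_1(v),c_2(v)$, uses the Local Lemma to control the number of ``risky'' edges at each vertex, and then applies Corollary~\ref{1_6Lemma} twice to deterministically build $H_1$ and $H_2$ with prescribed degree residues. The bounded-minimum-degree part of your sketch (``structural induction'', ``peeling pendant paths'', ``branch vertices'') is entirely hypothetical: no such argument is known, and the paper offers nothing in this direction. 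In particular, your appeal to Observation~\ref{equivEdge_2} is misplaced, since that observation concerns only regular graphs and $2$-decompositions, and does not help with the general bounded-degree case.

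In short: there is no proof here to compare, because neither the paper nor your proposal contains one.
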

The graphs excluded in the conjecture above were also proven to be the only connected
graphs which do not admit decompositions into any number of locally irregular subgraphs.
This conjecture was verified in~\cite{LocalIrreg_1} for some classes of graph, e.g.,
complete graph, complete bipartite graphs, trees, cartesian products of graphs with the desired property
(hypercubes for instance), and for regular graphs with large degrees.

The main result of this paper is the following strengthening of Theorem~\ref{multisets_th_2},
which confirms Conjecture~\ref{BBPWConjecture1} for graphs of sufficiently large minimum degree.
\begin{theorem}\label{main_Th_general_graphs}
Every graph $G$ with minimum degree $\delta\geq 10^{10}$
can be decomposed into three locally irregular subgraphs.
\end{theorem}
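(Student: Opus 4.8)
The plan is to produce a locally irregular $3$-edge colouring of $G$ directly, combining a probabilistic argument controlled by the Lov\'asz Local Lemma with degree-constrained subgraph theorems that prepare the ground for it, in the spirit of the proof of Theorem~\ref{multisets_th_2}. The first point is that only edges joining vertices of nearly equal degree can cause trouble: rounding the uniform fractional edge weighting (each edge, each colour, weight $\frac13$) by a degree-constrained subgraph theorem produces a partition $E=E_1\cup E_2\cup E_3$ with $|d_i(v)-\frac13 d_G(v)|\le C$ for every vertex $v$ and colour $i$, where $d_i(v)$ is the number of colour-$i$ edges at $v$ and $C$ an absolute constant; then for any edge $uv$ with $|d_G(u)-d_G(v)|>6C$ the ranges of possible values of $d_i(u)$ and $d_i(v)$ are disjoint, so $uv$ cannot spoil local irregularity of any colour class. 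It therefore suffices to recolour, inside the subgraph $G'$ spanned by the remaining ``dangerous'' edges, so as to kill all conflicts while keeping the partition quasi-balanced.

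To handle $G'$, I would group its vertices into degree bands of bounded width, so that a conflict can only occur inside a band or between two consecutive bands, and treat the bands essentially one at a time. For a band of vertices of $G$-degree about $d$, split its vertices uniformly at random into a bounded number of groups and recolour the incident dangerous edges according to the groups, recursing since one pass does not reach all such edges, and choosing the recolouring by a degree-constrained subgraph theorem so that in each colour the partial degree $d_i(v)$ is forced to a prescribed value near $\frac13 d_G(v)$ that is ``high'' on one group and ``low'' on the others; the resulting gaps, of order $d$ and shrinking geometrically along the recursion while staying above the $O(\sqrt d)$ random fluctuations as long as the relevant degrees do, make almost every recoloured edge safe. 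The few residual coincidences — dangerous edges whose endpoints happen to receive equal colour-$i$ value — are eliminated by the Local Lemma: one defines a bad event per such edge, bounds its probability by $O(1/\sqrt d)$ via a local central-limit estimate for sums of independent indicators, bounds its dependency by a power of the effective maximum degree, which the preparation has made $O(\sqrt d)$, and checks the Local Lemma inequality, which holds once $d\ge\delta(G)\ge 10^{10}$, so that $\sqrt d\ge 10^5$ absorbs the absolute constants coming from rounding and concentration.

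Finally one glues the pieces together: with a suitable ordering of the bands and a fixed rule for the edges between consecutive bands, the recolourings interact only locally and every $d_i(v)$ stays within an absolute constant of $\frac13 d_G(v)$ throughout, so quasi-balance survives globally, no new conflict is created between vertices of sufficiently different degree, and the three resulting colour classes are locally irregular subgraphs partitioning $E(G)$. The main obstacle — and the reason the hypothesis is a single enormous constant rather than something delicate — is the presence of vertices of arbitrarily large degree: the Local Lemma cannot be applied to $G$ outright, since such a vertex may meet hugely many conflicting edges and the dependency explodes, so the degree-constrained subgraph machinery has to be deployed to (i) keep all the auxiliary split-off subgraphs locally irregular, (ii) turn the conflicts within every band into rare, locally checkable events no matter how large the degrees are, and (iii) keep the partial-degree bookkeeping consistent across all bands at once; reconciling these three demands, and swallowing the accumulated constant losses into the threshold $10^{10}$, is the technical heart of the argument.
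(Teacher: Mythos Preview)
Your sketch assembles the right ingredients but contains a concrete gap at the Local Lemma step. You split each degree band into a \emph{bounded} number $K$ of random groups and then assert that after recolouring ``the effective maximum degree'' for residual conflicts has become $O(\sqrt d)$. With bounded $K$ this is simply false: a vertex $v$ of degree $d$ all of whose neighbours lie in its band has in expectation $d/K=\Theta(d)$ neighbours in its own group, hence $\Theta(d)$ residual conflicts, not $O(\sqrt d)$; your per-edge Local Lemma then fails because the dependency degree is $\Theta(d)$ while the bad-event probability is only $O(1/\sqrt d)$. The geometric recursion you allude to does not repair this in any way you have made precise: to drive the residual degree down to $O(\sqrt d)$ you would need $\Theta(\log d)$ rounds, each invoking a degree-constrained subgraph theorem that simultaneously controls three colour classes inside every previously created group, and you give no mechanism for executing that consistently, nor for the ``gluing'' across bands that you defer to the final paragraph. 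As a smaller point, the opening step --- a tripartition with $|d_i(v)-\tfrac13 d(v)|\le C$ for an \emph{absolute} constant $C$ --- is not a consequence of Theorem~\ref{1_3_2_3Lemma} or Corollary~\ref{1_6Lemma} as stated.

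For comparison, the paper avoids all of this by letting the number of labels \emph{grow} with the degree. Each vertex $v$ receives two independent random labels $c_1(v),c_2(v)\in\{0,\dots,2^{\lceil\log_\beta d(v)\rceil}-1\}$ with $\beta=2^{1/0.38}$, so an edge is ``risky'' with probability about $d(v)^{-0.38}$. The Local Lemma is applied once, globally, to per-\emph{vertex} events of the form ``$v$ has more than $8d(v)^{0.62}$ risky neighbours'', whose probabilities are exponentially small in a power of $d(v)$ by Chernoff; this absorbs the polynomial dependency $O(d^2)$ effortlessly, with no need to manufacture an $O(\sqrt d)$ world. With the labels fixed, Corollary~\ref{1_6Lemma} is applied just twice (not recursively) to force $d_{H_1}(v)$ and $d_{H_2}(v)$ into prescribed residue classes modulo $3\cdot4^{\lceil\log_\beta d(v)\rceil}$; the few risky edges are set aside beforehand and reinserted afterwards with the help of a greedy proper colouring $h$, and the resulting bounds $d_{H_i}(v)\in[\tfrac{4}{37}d(v),\tfrac{2}{3}d(v)]$ handle all edges whose endpoints differ in degree by a factor at least $\beta$. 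There are no degree bands, no recursion, and no gluing.
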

Its proof combines a probabilistic approach with some known theorems on degree constrained subgraphs.

To exemplify the fact that the two graph invariants representing the minimum
numbers of colors necessary to create a neighbour multiset distinguishing edge colouring and
a locally irregular edge colouring, resp., are indeed distinct,
let us consider a graph constructed as follows.
Take a single edge, say $uv$, and append two hanging paths of length $2$ to the vertex $u$ and
another two hanging paths of length $2$ to the vertex $v$.
It is easy to see that there exist multiset distinguishing $2$-edge colourings
of such graph, none of which is locally irregular.
Creating the later colouring requires $3$ colours.
This example may also be easily generalized by
substituting the paths of length $2$ with any other even paths.

In the following, given two graphs $H_1=(V_1,E_1)$, $H_2=(V_2,E_2)$, usually subgraphs of a host graph $G$,
by $H_1\cup H_2$ we shall mean the graph $(V_1\cup V_2, E_1\cup E_2)$.
Moreover, we shall write $H_2\subset H_1$ if $V_2\subset V_1$ and $E_2\subset E_1$,
and in case of $H_2\subset H_1$, we shall also write $H_1-E(H_2)$ to denote the graph obtained from $H_1$ by removing the edges of $H_2$.
Given a subset $E'$ of edges of a graph $G=(V,E)$,
the \emph{graph induced by} $E'$ shall be understood as $G':=(V,E')$.

\section{Tools}\label{section_with_tools}

We shall use
the Lov\'asz Local Lemma and the Chernoff Bound,
classical
tools of the probabilistic method,
see e.g.~\cite{AlonSpencer} and~\cite{MolloyReed}, respectively.
\begin{theorem}[\textbf{The Local Lemma; General Case}]\label{LLL-general}
Let $\mathcal{A}$ be a finite family of (typically bad) events in any probability space
and let $D=(\mathcal{A},E)$ be a directed graph
such that every event $A\in \mathcal{A}$ is mutually independent of all the events $\{B: (A,B)\notin E\}$.
Suppose that there are real numbers $x_A$ ($A\in\mathcal{A}$) such that for every $A\in\mathcal{A}$, $0\leq x_A<1$ and
\begin{equation}\label{EqLLL-general}
{\rm \emph{\textbf{Pr}}}(A) \leq x_A \prod_{B\leftarrow A} (1-x_B).
\end{equation}
Then ${\rm \emph{\textbf{Pr}}}(\bigcap_{A\in\mathcal{A}}\overline{A})>0$.
\end{theorem}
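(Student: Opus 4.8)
The plan is to prove the standard inductive strengthening of the conclusion and then derive the theorem from it by a telescoping-product argument. Write $\Gamma(A):=\{B:(A,B)\in E\}$ for the set of out-neighbours of $A$ in $D$ (so the product $\prod_{B\leftarrow A}$ in (\ref{EqLLL-general}) runs over $B\in\Gamma(A)$), and recall that by hypothesis $A$ is mutually independent of the family $\{B:(A,B)\notin E\}$. The heart of the argument is the following claim, to be proved for every $A\in\mathcal{A}$ and every $S\subseteq\mathcal{A}\setminus\{A\}$ with $\mathbf{Pr}\bigl(\bigcap_{B\in S}\overline{B}\bigr)>0$:
\[
\mathbf{Pr}\Bigl(A\,\Big|\,\bigcap_{B\in S}\overline{B}\Bigr)\leq x_A .
\]

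I would prove this claim by induction on $|S|$. The base case $S=\emptyset$ is immediate from (\ref{EqLLL-general}) because each factor $1-x_B$ is at most $1$. For the inductive step I would split $S=S_1\cup S_2$ with $S_1:=S\cap\Gamma(A)$ and $S_2:=S\setminus\Gamma(A)$, and write the conditional probability as the quotient
\[
\mathbf{Pr}\Bigl(A\,\Big|\,\bigcap_{B\in S}\overline{B}\Bigr)=\frac{\mathbf{Pr}\bigl(A\cap\bigcap_{B\in S_1}\overline{B}\ \big|\ \bigcap_{C\in S_2}\overline{C}\bigr)}{\mathbf{Pr}\bigl(\bigcap_{B\in S_1}\overline{B}\ \big|\ \bigcap_{C\in S_2}\overline{C}\bigr)} .
\]
For the numerator I would discard the intersection with $\bigcap_{B\in S_1}\overline{B}$ and invoke mutual independence of $A$ from $S_2$ (whose members are non-out-neighbours of $A$), giving the upper bound $\mathbf{Pr}(A)\leq x_A\prod_{B\in\Gamma(A)}(1-x_B)$. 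For the denominator I would enumerate $S_1=\{B_1,\dots,B_r\}$, expand it as the telescoping product $\prod_{i=1}^{r}\mathbf{Pr}\bigl(\overline{B_i}\ \big|\ \bigcap_{j<i}\overline{B_j}\cap\bigcap_{C\in S_2}\overline{C}\bigr)$, and apply the induction hypothesis to each factor (each conditioning set has strictly fewer than $|S|$ events), obtaining the lower bound $\prod_{i=1}^{r}(1-x_{B_i})\geq\prod_{B\in\Gamma(A)}(1-x_B)$, where the last step uses $S_1\subseteq\Gamma(A)$ and $1-x_B\in(0,1]$. Dividing, the products over $\Gamma(A)$ cancel and exactly $x_A$ survives.

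With the claim established, I would finish by fixing an enumeration $\mathcal{A}=\{A_1,\dots,A_n\}$ and using the chain rule,
\[
\mathbf{Pr}\Bigl(\bigcap_{i=1}^{n}\overline{A_i}\Bigr)=\prod_{i=1}^{n}\mathbf{Pr}\Bigl(\overline{A_i}\,\Big|\,\bigcap_{j<i}\overline{A_j}\Bigr)\geq\prod_{i=1}^{n}(1-x_{A_i})>0,
\]
each factor being bounded below by applying the claim with $A=A_i$ and $S=\{A_1,\dots,A_{i-1}\}$, and being strictly positive because $x_{A_i}<1$. This same positivity shows inductively that every conditioning event has positive probability, so all the conditional probabilities above are well defined.

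The step I expect to require the most care is the bookkeeping in the inductive quotient: one must check that the conditioning sets in the telescoping denominator are genuinely smaller than $S$ so that the induction hypothesis applies, that every conditioned event keeps positive probability throughout, and that mutual independence is invoked only on the numerator and only against the non-neighbour part $S_2$. Everything else reduces to the elementary inequality $\prod_{B\in S_1}(1-x_B)\geq\prod_{B\in\Gamma(A)}(1-x_B)$ and the cancellation of the two identical products over $\Gamma(A)$.
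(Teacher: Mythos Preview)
Your argument is the standard inductive proof of the general Lov\'asz Local Lemma and it is correct as written; the bookkeeping you flag (smaller conditioning sets in the telescoped denominator, positivity of all conditioning events via the containment $\{B_1,\dots,B_{i-1}\}\cup S_2\subseteq S$, and use of mutual independence only against $S_2$) all goes through.

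There is nothing to compare against in the paper: Theorem~\ref{LLL-general} is quoted there as a black-box tool from \cite{AlonSpencer} and is not proved. Your proof is precisely the textbook one found in that reference.
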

Here $B\leftarrow A$ (or $A\rightarrow B$) means that there is an arc from $A$ to $B$ in $D$,
the so called \emph{dependency digraph}.

\begin{theorem}[\textbf{Chernoff Bound}]\label{ChernofBoundTh}
For any $0\leq t\leq np$:
$${\rm \emph{\textbf{Pr}}}(|{\rm BIN}(n,p)-np|>t)<2e^{-\frac{t^2}{3np}},$$
where ${\rm BIN}(n,p)$ is the sum of $n$ independent variables, each equal to $1$ with probability $p$ and $0$ otherwise.
\end{theorem}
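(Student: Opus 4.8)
The plan is to prove the two tails separately and combine them by a union bound, using the classical exponential moment (Chernoff--Bernstein) method. Write $X=\mathrm{BIN}(n,p)=\sum_{i=1}^{n}X_i$, where the $X_i$ are independent Bernoulli$(p)$ variables, and set $\mu=np$. Since $\{|X-\mu|>t\}=\{X-\mu>t\}\cup\{\mu-X>t\}$, it suffices to bound each of $\mathbf{Pr}(X>\mu+t)$ and $\mathbf{Pr}(X<\mu-t)$ by $e^{-t^2/(3\mu)}$; adding the two estimates produces the factor $2$ appearing in the statement.

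For the upper tail I would fix a parameter $\lambda>0$ and apply Markov's inequality to the nonnegative variable $e^{\lambda X}$, which gives $\mathbf{Pr}(X>\mu+t)\le e^{-\lambda(\mu+t)}\,\mathbf{E}[e^{\lambda X}]$. Independence factorises the moment generating function, $\mathbf{E}[e^{\lambda X}]=\prod_i\mathbf{E}[e^{\lambda X_i}]=(1+p(e^{\lambda}-1))^{n}$, and the elementary inequality $1+x\le e^{x}$ bounds this by $e^{\mu(e^{\lambda}-1)}$. Hence $\mathbf{Pr}(X>\mu+t)\le\exp\bigl(\mu(e^{\lambda}-1)-\lambda(\mu+t)\bigr)$. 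Optimising over $\lambda$ (the minimiser is $e^{\lambda}=1+t/\mu$) turns this into the multiplicative bound $\bigl(e^{\delta}/(1+\delta)^{1+\delta}\bigr)^{\mu}$, where $\delta:=t/\mu\in[0,1]$. The lower tail is handled symmetrically, now applying Markov to $e^{-\lambda X}$ with $\lambda>0$; the same steps yield $\mathbf{Pr}(X<\mu-t)\le\bigl(e^{-\delta}/(1-\delta)^{1-\delta}\bigr)^{\mu}$, with the boundary case $\delta=1$ being trivial since then $\mathbf{Pr}(X<0)=0$.

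The remaining, and genuinely delicate, step is to convert these two multiplicative bounds into the clean exponential form $e^{-\mu\delta^2/3}=e^{-t^2/(3\mu)}$ valid throughout $0\le\delta\le1$. Taking logarithms, for the upper tail this amounts to the one-variable inequality $(1+\delta)\ln(1+\delta)-\delta\ge\delta^2/3$ on $[0,1]$, and for the lower tail to the analogous $-\delta-(1-\delta)\ln(1-\delta)\le-\delta^2/3$. I would verify the first by setting $f(\delta)=(1+\delta)\ln(1+\delta)-\delta-\delta^2/3$, noting $f(0)=0$ and $f'(\delta)=\ln(1+\delta)-\frac{2}{3}\delta$ with $f'(0)=0$. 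The subtle point here is that $f''(\delta)=\frac{1}{1+\delta}-\frac{2}{3}$ changes sign at $\delta=\frac12$, so one cannot simply invoke convexity of $f$. Instead I would observe that $f'$ is unimodal on $[0,1]$: it rises from $f'(0)=0$ to its maximum at $\delta=\frac12$, then decreases, yet still satisfies $f'(1)=\ln 2-\frac{2}{3}>0$. Consequently $f'\ge0$ on all of $[0,1]$, so $f$ is nondecreasing and $f\ge f(0)=0$, which is exactly the desired inequality.

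I expect this last, sign-sensitive estimate to be where the only real work lies; everything preceding it is standard Chernoff machinery. The lower-tail inequality is treated by the same calculus and is in fact slightly easier, since the lower tail even admits the stronger constant $\frac12$ in place of $\frac13$; thus it too is dominated by $e^{-\mu\delta^2/3}$. Combining the upper and lower estimates via the union bound then gives $\mathbf{Pr}(|X-\mu|>t)<2e^{-t^2/(3np)}$ for every $0\le t\le np$, as claimed.
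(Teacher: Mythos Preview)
Your argument is correct and is precisely the standard exponential-moment derivation of the two-sided Chernoff bound; the calculus verifying $(1+\delta)\ln(1+\delta)-\delta\ge\delta^2/3$ on $[0,1]$ via the unimodality of $f'$ is handled carefully and the lower-tail remark about the sharper constant $\tfrac12$ is accurate.

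There is, however, nothing to compare against: the paper does not prove this theorem at all. It is quoted in Section~\ref{section_with_tools} as a standard tool with a reference to Molloy and Reed, and is then invoked as a black box in the probabilistic lemma. So your proposal is not an alternative to the paper's proof but rather a supplement filling in a result the paper deliberately cites from the literature.
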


For the deterministic part of our proof we shall in turn use a consequence of the following theorem
from~\cite{Louigi30}
(see also \cite{Louigi2,Louigi} for similar degree theorems and their applications).
\begin{theorem} \label{1_3_2_3Lemma}
Suppose that for some graph $G=(V,E)$ we have chosen, for every
vertex $v$, two integers:
$$a^-_v\in \left[\frac{d(v)}{3}-1,\frac{d(v)}{2}\right],
~~a^+_v\in \left[\frac{d(v)}{2}-1,\frac{2d(v)}{3}\right].$$
Then there exists a spanning subgraph $H$ of $G$ such that for every $v\in V$:
$$d_H(v)\in\{a^-_v,a^-_v+1,a^+_v,a^+_v+1\}.$$
\end{theorem}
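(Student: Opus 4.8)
The plan is to realise $H$ as a $(g,f)$-factor and to invoke Lov\'asz's $(g,f)$-factor theorem. The first observation is that each of the two admissible windows $\{a^-_v,a^-_v+1\}$ and $\{a^+_v,a^+_v+1\}$ is a block of two \emph{consecutive} integers, so if for every vertex I commit in advance to one of its windows --- say by fixing $g(v)=a^-_v,\,f(v)=a^-_v+1$ or $g(v)=a^+_v,\,f(v)=a^+_v+1$ --- then I am exactly asking for a spanning subgraph $H$ with $g(v)\le d_H(v)\le f(v)$ and $g(v)<f(v)$ everywhere. Any such $H$ automatically satisfies the conclusion, since $d_H(v)\in\{g(v),f(v)\}$ lies in the chosen window. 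The advantage of insisting on intervals of length exactly $1$ is that the troublesome odd-component (parity) term in the Tutte--Lov\'asz feasibility criterion vanishes, so that existence of the factor is governed by a single cut inequality rather than by a delicate parity condition.

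Concretely, I would use that, when $g(v)<f(v)$ for all $v$, the graph has a $(g,f)$-factor if and only if
$$\sum_{v\in S}f(v)+\sum_{v\in T}\bigl(d(v)-g(v)\bigr)\ \ge\ e(S,T)$$
for every pair of disjoint sets $S,T\subseteq V$, where $e(S,T)$ denotes the number of edges between $S$ and $T$ (this is the inequality one obtains from Lov\'asz's theorem once the parity term is dropped, using $\sum_{v\in T}d_{G-S}(v)=\sum_{v\in T}d(v)-e(S,T)$). The second step is then purely arithmetic: to verify this inequality using only the band bounds $a^-_v\ge d(v)/3-1$ and $a^+_v\le 2d(v)/3$, which keep every admissible target degree a constant fraction of $d(v)$ away from both $0$ and $d(v)$. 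Choosing the low window gives $f(v)\approx d(v)/3$ but $d(v)-g(v)\approx 2d(v)/3$, and the high window reverses these magnitudes, so each vertex contributes a large term to exactly one of the two sums on the left.

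The step I expect to be the main obstacle is choosing, vertex by vertex, which window to commit to so that the cut inequality holds for \emph{all} disjoint $S,T$ simultaneously. A uniform choice will not do: committing every vertex to its low window makes $f(v)$ uniformly small and lets a set $S$ with many outgoing edges violate the bound, while the all-high choice symmetrically endangers sets $T$ through the factor $d(v)-g(v)$. The resolution I would pursue is to make the assignment adaptively --- roughly, sending a vertex to its high window when it sits on the dense ($S$) side of a potentially tight cut and to its low window otherwise --- and to show, via a discharging or flow/Eulerian-orientation argument, that the constant-fraction slack guaranteed by the balanced band $[\,d(v)/3,\,2d(v)/3\,]$ always leaves room for some globally consistent assignment. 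Equivalently, one may bypass the explicit choice by phrasing the whole question as a feasible fractional subgraph with half-integer degrees $a^\pm_v+\frac12$ and then rounding to the floor and ceiling at each vertex; because every window has length $1$ the rounding introduces no parity defect, and the entire difficulty is again concentrated in establishing fractional feasibility from the balanced ranges.
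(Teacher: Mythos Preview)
The paper does not prove this theorem; it is quoted from Addario-Berry, Dalal, McDiarmid, Reed and Thomason (reference~\cite{Louigi30} in the paper) and used as a black box. So there is no ``paper's own proof'' to compare against, and your proposal is really an attempt to reconstruct the Addario-Berry et al.\ argument.

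On its merits, your plan has a genuine gap. The reduction to a single $(g,f)$-factor problem with $f(v)=g(v)+1$ is fine \emph{once a window has been fixed at every vertex}, and you correctly isolate the real difficulty: finding one global assignment of windows for which the Lov\'asz cut condition
\[
\sum_{v\in S} f(v)+\sum_{v\in T}\bigl(d(v)-g(v)\bigr)\ \ge\ e(S,T)
\]
holds for every disjoint pair $S,T$. But you do not actually solve this. ``Send a vertex to its high window when it sits on the dense side of a potentially tight cut'' is not well defined, since the same vertex can lie in $S$ for one nearly-tight pair $(S,T)$ and in $T$ for another; there is no single cut to adapt to. The ``discharging or flow/Eulerian-orientation argument'' is not specified. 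And the fractional-rounding alternative, as you describe it, still requires choosing for each $v$ whether the fractional target is $a^-_v+\tfrac12$ or $a^+_v+\tfrac12$, which is exactly the same choice-of-window problem restated. Your own check that a uniform choice fails (e.g.\ on a balanced complete bipartite graph one only gets roughly $\tfrac12+\tfrac13$ of $e(S,T)$) shows that something nontrivial is needed here, and that step is missing.

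More fundamentally, the approach of pre-committing each vertex to one window discards precisely the flexibility that makes the theorem true. The original proof does \emph{not} reduce to a single length-one interval per vertex; it keeps both windows available and exploits the fact that they straddle $d(v)/2$ (note $a_v^-\le d(v)/2$ and $a_v^+\ge d(v)/2-1$). One route is to start from a spanning subgraph whose degrees are within one of $\lfloor d(v)/2\rfloor$ (obtainable from an Eulerian orientation) and then apply a second factor argument to push each degree up to $\{a_v^+,a_v^++1\}$ or down to $\{a_v^-,a_v^-+1\}$; another is to invoke a generalized factor theorem that handles a union of two short intervals directly. In either case the two windows interact across the graph, and it is this interaction---not a prior per-vertex commitment---that makes the feasibility condition go through.
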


\begin{corollary} \label{1_6Lemma}
Suppose that for some graph $G=(V,E)$ we have chosen, for every
vertex $v$, a positive integer $\lambda_v$ with $6\lambda_v \leq d(v)$. Then for every
assignment
$$t:V\to \mathbb{Z},$$
there exists a spanning subgraph $H$ of $G$ such that $d_H(v)\in[\frac{d(v)}{3},\frac{2d(v)}{3}]$ and
$d_H(v)\equiv t(v) \pmod {\lambda_v}$ or $d_H(v)\equiv t(v)+1 \pmod {\lambda_v}$ for each $v\in V$.
\end{corollary}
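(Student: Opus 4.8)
The plan is to invoke Theorem~\ref{1_3_2_3Lemma} after choosing the anchors $a_v^-$ and $a_v^+$ so that both of the consecutive pairs $\{a_v^-,a_v^-+1\}$ and $\{a_v^+,a_v^++1\}$ that it may return fall inside $[\tfrac{d(v)}{3},\tfrac{2d(v)}{3}]$ and consist of integers congruent to $t(v)$ or $t(v)+1$ modulo $\lambda_v$. Concretely, for each $v\in V$ I would let $a_v^-$ be the least integer with $a_v^-\ge \tfrac{d(v)}{3}$ and $a_v^-\equiv t(v)\pmod{\lambda_v}$, and $a_v^+$ the least integer with $a_v^+\ge \tfrac{d(v)}{2}-1$ and $a_v^+\equiv t(v)\pmod{\lambda_v}$. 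Since among any $\lambda_v$ consecutive integers exactly one lies in a prescribed residue class, we have $a_v^-<\tfrac{d(v)}{3}+\lambda_v$ and $a_v^+<\tfrac{d(v)}{2}-1+\lambda_v$.

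Next I would check that these are admissible inputs for Theorem~\ref{1_3_2_3Lemma}. The hypothesis $6\lambda_v\le d(v)$ gives $\lambda_v\le \tfrac{d(v)}{6}$, whence $\tfrac{d(v)}{3}\le a_v^-<\tfrac{d(v)}{3}+\tfrac{d(v)}{6}=\tfrac{d(v)}{2}$, so $a_v^-\in[\tfrac{d(v)}{3}-1,\tfrac{d(v)}{2}]$, and $\tfrac{d(v)}{2}-1\le a_v^+<\tfrac{d(v)}{2}-1+\tfrac{d(v)}{6}=\tfrac{2d(v)}{3}-1$, so $a_v^+\in[\tfrac{d(v)}{2}-1,\tfrac{2d(v)}{3}]$. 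Theorem~\ref{1_3_2_3Lemma} then supplies a spanning subgraph $H$ with $d_H(v)\in\{a_v^-,a_v^-+1,a_v^+,a_v^++1\}$ for every $v\in V$.

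It remains to verify that every value in this four-element set has the two required properties, which is immediate from the construction. For the congruences, $a_v^-\equiv a_v^+\equiv t(v)$ and $a_v^-+1\equiv a_v^++1\equiv t(v)+1\pmod{\lambda_v}$, so each of the four candidates is $\equiv t(v)$ or $\equiv t(v)+1$. For the degree window, $d(v)\ge 6\lambda_v\ge 6$ yields $a_v^-\ge\tfrac{d(v)}{3}$ and $a_v^+\ge\tfrac{d(v)}{2}-1\ge\tfrac{d(v)}{3}$, while $a_v^-+1<\tfrac{d(v)}{2}+1\le\tfrac{2d(v)}{3}$ and $a_v^++1<\tfrac{2d(v)}{3}$; hence all four candidates, and in particular $d_H(v)$, lie in $[\tfrac{d(v)}{3},\tfrac{2d(v)}{3}]$, so $H$ is as desired. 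I do not expect a genuine obstacle here: the entire content is the choice of the anchors $a_v^-,a_v^+$, and the only point needing care is the off-by-one bookkeeping at the endpoints of the two intervals — this is exactly why $a_v^+$ is anchored at $\tfrac{d(v)}{2}-1$ rather than at $\tfrac{d(v)}{2}$, and why the full slack in $6\lambda_v\le d(v)$ is used.
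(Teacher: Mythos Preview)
Your proof is correct and follows essentially the same route as the paper: both arguments pick $a_v^-$ and $a_v^+$ congruent to $t(v)$ modulo $\lambda_v$ inside the two windows required by Theorem~\ref{1_3_2_3Lemma}, using $\lambda_v\le d(v)/6$ to guarantee such representatives exist, and then read off the conclusion. The only cosmetic difference is that the paper phrases the pigeonhole step via the integer intervals $\{\lfloor d(v)/3\rfloor+1,\ldots,\lfloor d(v)/2\rfloor\}$ and $\{\lfloor d(v)/2\rfloor,\ldots,\lfloor 2d(v)/3\rfloor-1\}$, whereas you anchor at $d(v)/3$ and $d(v)/2-1$ directly; the endpoint bookkeeping you flag is handled correctly.
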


\begin{proof}
For every vertex $v\in V$ we have:
$$\left\lfloor\frac{d(v)}{2}\right\rfloor - \left\lfloor\frac{d(v)}{3}\right\rfloor+1
\geq \frac{d(v)-1}{2} - \frac{d(v)}{3}+1 > \frac{d(v)}{6}\geq \lambda_v.$$
Since both sides of the inequality above are integers, then in fact:
$$\left\lfloor\frac{d(v)}{2}\right\rfloor - \left\lfloor\frac{d(v)}{3}\right\rfloor \geq \lambda_v.$$
Analogously,
$$\left\lfloor\frac{2d(v)}{3}\right\rfloor - \left\lfloor\frac{d(v)}{2}\right\rfloor+1
\geq \frac{2d(v)-2}{3}-\frac{d(v)}{2}+1 > \frac{d(v)}{6}\geq \lambda_v,$$
hence,
$$\left\lfloor\frac{2d(v)}{3}\right\rfloor - \left\lfloor\frac{d(v)}{2}\right\rfloor \geq \lambda_v.$$
Thus the sets of integers
$$\left\{\left\lfloor\frac{d(v)}{3}\right\rfloor+1, \ldots, \left\lfloor\frac{d(v)}{2}\right\rfloor  \right\}
~~{\rm and}~~
\left\{\left\lfloor\frac{d(v)}{2}\right\rfloor, \ldots, \left\lfloor\frac{2d(v)}{3}\right\rfloor-1  \right\}$$
both contain all remainders modulo $\lambda_v$.
The thesis follows then by Theorem~\ref{1_3_2_3Lemma}
(it is sufficient to choose $a^-_v,a^+_v$ from these sets, resp., so that $a^-_v,a^+_v\equiv t(v) \pmod {\lambda_v}$). $\Box$
\end{proof}

\section{Proof of Theorem~\ref{main_Th_general_graphs}}
\subsection{Notions}
Let $G=(V,E)$ be a graph of minimum degree $\delta\geq 10^{10}$.
In the following by $d(v)$ we shall mean the degree of a vertex $v$ in $G$,
and we shall write $d(v)^p$ for short instead of $(d(v))^p$.
Let us denote the auxiliary `optimizing' constant $$\beta:=2^{\frac{1}{0.38}},$$
where $\beta\approx 6.2$ ($6.19<\beta< 6.2$).
In order to apply Corollary~\ref{1_6Lemma}, we shall also need two auxiliary vertex labelings, say $c_1$ and $c_2$,
with certain regular features.
Thus for every vertex $v$ let us first randomly and independently choose one value in $\{0,1,\ldots, 2^{\lceil\log_\beta d(v)\rceil}-1\}$,
each with equal probability, and denote it by $c_1(v)$.
Then let us independently repeat our drawing, i.e., again for every $v\in V$
randomly and independently we choose one value in $\{0,1,\ldots,2^{\lceil\log_\beta d(v)\rceil}-1\}$,
each with equal probability, and denote it by $c_2(v)$.

By our construction below it shall be clear that every edge whose one end has
the degree at least $\beta$ times bigger than the other will be `safe' from any potential conflicts between its end-vertices.
Some of the remaining edges shall require extra attention though, and shall thus be called `risky'.
We distinguish three kinds of these, i.e., we say that an edge $uv$ with $1/\beta d(v)< d(u)< \beta d(v)$ is:
\begin{itemize}
  \item \emph{risky of type 1} if\\
$2^{\lceil\log_\beta d(u)\rceil}c_1(u)\equiv 2^{\lceil\log_\beta d(v)\rceil}c_1(v)
\pmod{\min \{4^{\lceil\log_\beta d(u)\rceil},4^{\lceil\log_\beta d(v)\rceil} \}}$;
\item \emph{risky of type 2} if\\
$2^{\lceil\log_\beta d(u)\rceil}c_2(u)\equiv 2^{\lceil\log_\beta d(v)\rceil}c_2(v)
\pmod{\min \{4^{\lceil\log_\beta d(u)\rceil},4^{\lceil\log_\beta d(v)\rceil} \}}$;
\item \emph{risky of type 3} if\\
$|d(u)-3\cdot 2^{\lceil\log_\beta d(u)\rceil}(c_1(u)+c_2(u))-d(v)+3\cdot 2^{\lceil\log_\beta d(v)\rceil}(c_1(v)+c_2(v))|\\
<\min\{3\cdot 2^{\lceil\log_\beta d(u)\rceil},3\cdot 2^{\lceil\log_\beta d(v)\rceil}\} \pmod{\min \{3\cdot 4^{\lceil\log_\beta d(u)\rceil},3\cdot 4^{\lceil\log_\beta d(v)\rceil} \}}$,
\end{itemize}
where, given integers $b$ and $k$ with $b\in\{1,\ldots,k\}$, by writing
$|a|<b\pmod k$ we mean that $a$ is an integer which is congruent to
one of the following: $-b+1,-b+2,\ldots,b-1$ modulo $k$.
Denote the sets of risky edges of types 1, 2 and 3 by $R_1$, $R_2$ and $R_3$, respectively.
For each $v\in V$, let us also denote:
\begin{eqnarray*}
A(v):&=&\left\{u\in N_G(v): uv\in R_1 \right\},\\
B(v):&=&\left\{u\in N_G(v): uv\in R_2 \right\},\\
C(v):&=&\left\{u\in N_G(v): uv\in R_3 \right\},\\
F(v):&=&B(v)\cap C(v)=\left\{u\in N_G(v): uv\in R_2\cap R_3 \right\}.
\end{eqnarray*}

\subsection{Probabilistic Lemma}
\begin{claim}
With positive probability, for every vertex $v\in V$:
\begin{eqnarray}
|A(v)|,|B(v)|,|C(v)| &\leq& 8d(v)^{0.62}~~{\rm and}\label{ABC_ineq}\\
|F(v)| &\leq& 12d(v)^{0.24}.\label{F_ineq}
\end{eqnarray}
\end{claim}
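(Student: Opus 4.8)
The plan is to apply the Lovász Local Lemma (Theorem~\ref{LLL-general}) to the random choices $c_1(v), c_2(v)$, $v\in V$. For each vertex $v$ I introduce four bad events: $A_v^{(1)}$ that $|A(v)| > 8d(v)^{0.62}$, $A_v^{(2)}$ that $|B(v)| > 8d(v)^{0.62}$, $A_v^{(3)}$ that $|C(v)| > 8d(v)^{0.62}$, and $A_v^{(4)}$ that $|F(v)| > 12d(v)^{0.24}$. The key is to estimate, for a fixed edge $uv$ with $1/\beta\, d(v) < d(u) < \beta\, d(v)$, the probability that $uv$ is risky of each type. First I would show that being risky of type~1 (resp.\ type~2) has probability at most roughly $d(v)^{-0.38}$: since $2^{\lceil\log_\beta d(u)\rceil}$ and $2^{\lceil\log_\beta d(v)\rceil}$ are both at least the modulus $m := \min\{4^{\lceil\log_\beta d(u)\rceil}, 4^{\lceil\log_\beta d(v)\rceil}\}$ divided by $2^{\lceil\log_\beta d(\cdot)\rceil}$ — more precisely, writing $M_w := 2^{\lceil\log_\beta d(w)\rceil}$ one has $M_w \geq d(w)^{1/\log_2\beta} = d(w)^{0.38}$ and $M_w < \beta\, d(w)^{0.38}$, while $m = \min\{M_u^2, M_v^2\}$ — the linear map $c_1(v)\mapsto M_v c_1(v)$ modulo $m$ is injective and its image is an arithmetic progression with common difference $M_v$, so for each value of $c_1(u)$ the number of "bad" values of $c_1(v)$ is at most $m/M_v$, out of $M_v$ equally likely choices; hence the conditional probability is at most $m/M_v^2 \leq 1$ if $M_u \leq M_v$, which is not yet useful, so I instead condition on the vertex of larger $M$-value and bound by $M_{\min}\cdot (m/M_{\max}^2)/M_{\min}$-type arithmetic, ultimately getting $\mathbf{Pr}(uv\in R_1) \leq 1/M_{\min} \leq d(v)^{-0.38}$ (up to the constant $\beta$, using $1/\beta < d(u)/d(v) < \beta$). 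The same bound holds for $R_2$; for $R_3$ a similar count over the $M_u^2 M_v^2$-scale sum $c_1+c_2$ gives $\mathbf{Pr}(uv\in R_3) \leq C\, d(v)^{-0.38}$ for an absolute constant, since the "interval of width $3M_{\min}$ modulo $3M_{\min}^2$" condition again picks out a $1/M_{\min}$ fraction.

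Next, the expectations: $\mathbf{E}|A(v)| = \sum_{u\in N(v), \text{degree-comparable}} \mathbf{Pr}(uv\in R_1) \leq d(v)\cdot C\beta^{0.38} d(v)^{-0.38} \leq 4 d(v)^{0.62}$ for a suitable reading of the constants, and similarly for $|B(v)|, |C(v)|$; and $\mathbf{E}|F(v)| \leq \sum_u \mathbf{Pr}(uv\in R_2)\mathbf{Pr}(uv\in R_3) \leq d(v)\cdot (C\beta^{0.38})^2 d(v)^{-0.76} \leq 6 d(v)^{0.24}$, using independence of the $c_2$-event from the $c_1$-event in $R_3$ — here I need to be a little careful: $R_3$ depends on both $c_1$ and $c_2$, whereas $R_2$ depends only on $c_2$, so $\mathbf{Pr}(uv\in R_2\cap R_3)$ factors only after conditioning on $c_1(u), c_1(v)$; conditioning on these, $R_3$ becomes a $c_2$-only event of the same $1/M_{\min}$ probability, and it is over the same variables $c_2(u), c_2(v)$ as $R_2$, so actually $R_2$ and $R_3$ need not be independent even conditionally — I would instead bound $\mathbf{Pr}(uv\in R_2 \cap R_3 \mid c_1) \leq \mathbf{Pr}(uv \in R_3 \mid c_1)$ trivially and realize the product structure by a more careful two-dimensional lattice-point count showing the joint event on $(c_2(u), c_2(v))$ still has probability $O(M_{\min}^{-2})$. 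Having the expectations at half the claimed thresholds, I apply the Chernoff Bound (Theorem~\ref{ChernofBoundTh}): $|A(v)|$ is a sum of independent indicators (indexed by neighbours $u$, and for fixed $v$ the events $uv\in R_1$ are independent across $u$ since they involve distinct variables $c_1(u)$... except they all share $c_1(v)$!). This is the main subtlety and the main obstacle.

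The shared variable $c_1(v)$ means $|A(v)|$ is not literally a sum of independent indicators, so plain Chernoff does not apply. I would resolve this exactly as in the Addario-Berry et al.\ work: condition on $c_1(v)$; then $|A(v)|$ becomes a sum of independent indicators (one per neighbour $u$, each depending only on $c_1(u)$), and the conditional probability of each is still $\leq C\beta^{0.38}d(v)^{-0.38}$ uniformly over the value of $c_1(v)$ — because the lattice-point count above was uniform in $c_1(v)$. So the Chernoff bound applies conditionally, giving $\mathbf{Pr}(|A(v)| > 8d(v)^{0.62} \mid c_1(v)) < 2e^{-d(v)^{0.62}/c}$ for some constant $c$, hence the same unconditional bound; likewise for the other events, with $|F(v)|$ handled by conditioning on both $c_1(v)$ and $c_2(v)$ and noting $12 d(v)^{0.24}$ is twice its conditional expectation. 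Each bad event for $v$ depends only on $\{c_1(u), c_2(u) : u\in N(v)\}$, hence is mutually independent of all bad events for vertices at distance $\geq 3$ from $v$; the number of vertices within distance $2$ of $v$ is at most $\Delta^2 \leq$ (some polynomial in the degrees), so the dependency digraph has degree at most, say, $4\Delta(v)^2$. Taking $x_{A} = e^{-\sqrt{d(v)}}$ (or any quantity that is exponentially small in a power of $\delta$ but dominates $4\Delta^2$ times itself), the LLL inequality $\mathbf{Pr}(A)\leq x_A\prod(1-x_B)$ is verified because the left side is doubly-exponentially small while $x_A\prod(1-x_B) \geq x_A(1 - 4\Delta^2 x_{\max}) \geq x_A/2$, and $\delta \geq 10^{10}$ makes $d(v)^{0.62}$ large enough to beat the polynomial dependency degree. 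Therefore with positive probability no bad event occurs, i.e.\ \eqref{ABC_ineq} and \eqref{F_ineq} hold simultaneously for all $v\in V$. The hard part, to reiterate, is organizing the lattice-point counts so that the per-edge risky probabilities are bounded by $\Theta(d(v)^{-0.38})$ \emph{uniformly} in the shared endpoint's random value, which is precisely what makes the conditional Chernoff argument go through.
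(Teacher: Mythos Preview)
Your overall architecture matches the paper's: define four bad events per vertex, bound per-edge ``risky'' probabilities by $O(d(v)^{-0.38})$ \emph{uniformly after conditioning on the labels at $v$}, apply Chernoff conditionally to make the bad events exponentially unlikely, then finish with the Local Lemma. You also correctly identify the main subtlety, namely that the indicators for $|A(v)|$ share the variable $c_1(v)$ and so one must condition on it before invoking Chernoff. That is exactly what the paper does.

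However, your Local Lemma step has two genuine problems. First, your choice $x_A=e^{-\sqrt{d(v)}}$ is on the wrong side of $\mathbf{Pr}(A)$: the Chernoff bound gives $\mathbf{Pr}(A_v)\le 2e^{-\Theta(d(v)^{0.24})}$ (the worst case comes from $F_v$), and since $d(v)^{0.5}\gg d(v)^{0.24}$ you have $x_A\ll \mathbf{Pr}(A_v)$, so the inequality $\mathbf{Pr}(A)\le x_A\prod(1-x_B)$ cannot hold. The paper instead takes $x_L=1/(1+d(v)^3)$, which is comfortably larger than the exponential tail yet small enough that the product $\prod(1-x_B)$ stays bounded below. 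Second, bounding the dependency degree by $4\Delta^2$ (with $\Delta$ the global maximum degree) is fatal: $\Delta$ is unbounded in terms of $d(v)$, so for a low-degree vertex $v$ the product $\prod(1-x_B)$ could be tiny. The fix, which the paper uses explicitly, is that the event for $v$ depends only on $c_i(u)$ for neighbours $u$ with $d(u)\in(d(v)/\beta,\beta d(v))$; hence in the dependency digraph the out-neighbours of any event at $v$ are events at vertices $w$ with $d(w)\in(d(v)/\beta^2,\beta^2 d(v))$, and their number is at most $3+4d(v)\lfloor\beta d(v)\rfloor=O(d(v)^2)$. This local control of both the dependency degree \emph{and} the degrees of the dependent vertices is what makes the LLL verification go through with the polynomial $x_L$.

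A smaller point: your treatment of $F(v)$ is more complicated than necessary. The paper avoids the ``two-dimensional lattice-point count'' by observing that, conditioned on $c_1(v),c_2(v),c_2(u)$, membership in $C(v)$ is an event in $c_1(u)$ alone with probability at most $4/d(v)^{0.38}$ uniformly in $c_2(u)$; since $u\in B(v)$ is determined by $c_2(u)$, one gets $\mathbf{Pr}(u\in F(v)\mid c_1(v),c_2(v))\le (4/d(v)^{0.38})(2/d(v)^{0.38})=8/d(v)^{0.76}$ directly.
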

\begin{proof}
For every $v\in V$, let $X_v,Y_v,Z_v,T_v$ be the random variables of the cardinalities of the sets $A(v),B(v),C(v),F(v)$, resp.,
and let $A_v$, $B_v$, $C_v$, $F_v$ denote the events that $X_v>8d(v)^{0.62}$, $Y_v>8d(v)^{0.62}$,
$Z_v>8d(v)^{0.62}$ and $T_v>12d(v)^{0.24}$, respectively.
Consider a vertex $v\in V$, and let $u$ be any of its neighbours
with $d(u)\in(1/\beta d(v),\beta d(v))$.
Note that $\lceil\log_\beta d(u)\rceil\in\left\{\lceil\log_\beta d(v)\rceil-1, \lceil\log_\beta d(v)\rceil, \lceil\log_\beta d(v)\rceil+1\right\}$.
Then for arbitrarily fixed $c^*_1,c^*_2,c^*_3\in \{0,1,\ldots, 2^{\lceil\log_\beta d(v)\rceil}-1\}$, in all cases:
\begin{eqnarray}
{\rm \textbf{Pr}}\left(u\in A(v)|c_1(v)=c^*_1\right) &\leq& \frac{1}{2^{\lceil\log_\beta d(v)\rceil-1}} \leq \frac{1}{2^{\log_\beta d(v)-1}}\nonumber\\ &=&\frac{2}{d(v)^{\frac{1}{\log_2 \beta}}} =  \frac{2}{d(v)^{0.38}},\label{A(v)_bound}
\end{eqnarray}
\begin{eqnarray}
{\rm \textbf{Pr}}\left(u\in B(v)|c_2(v)=c^*_2\right) &\leq& \frac{1}{2^{\lceil\log_\beta d(v)\rceil-1}} \leq \frac{1}{2^{\log_\beta d(v)-1}}\nonumber\\ &=&\frac{2}{d(v)^{\frac{1}{\log_2 \beta}}} =  \frac{2}{d(v)^{0.38}},\label{B(v)_bound}
\end{eqnarray}
\begin{eqnarray}
{\rm \textbf{Pr}}\left(u\in C(v)|c_1(v)=c^*_1\wedge c_2(v)=c^*_2\wedge c_2(u)=c^*_3\right) &\leq& \frac{2}{2^{\lceil\log_\beta d(v)\rceil-1}}\nonumber\\
&\leq& \frac{4}{d(v)^{\frac{1}{\log_2 \beta}}}\nonumber\\
&=& \frac{4}{d(v)^{0.38}},\label{C(v)_bound}
\end{eqnarray}
hence by the total probability:
\begin{equation}
{\rm \textbf{Pr}}\left(u\in C(v)|c_1(v)=c^*_1\wedge c_2(v)=c^*_2\right) \leq \frac{4}{d(v)^{0.38}}. \label{C(v)_bound_bis}
\end{equation}
Finally, since the choices for $c_1$ and $c_2$ are independent,
by~(\ref{B(v)_bound}) and~(\ref{C(v)_bound}),
\begin{eqnarray}
&&{\rm \textbf{Pr}}\left(u\in F(v)|c_1(v)=c^*_1\wedge c_2(v)=c^*_2\right)\nonumber\\
&=&{\rm \textbf{Pr}}\left(u\in C(v)| c_1(v)=c^*_1\wedge c_2(v)=c^*_2 \wedge u\in B(v)\right)\nonumber\\
&\times & {\rm \textbf{Pr}}\left(u\in B(v)|c_1(v)=c^*_1\wedge c_2(v)=c^*_2\right)\nonumber\\
&\leq& \frac{4}{d(v)^{0.38}} \cdot \frac{2}{d(v)^{0.38}} = \frac{8}{d(v)^{0.76}}.\label{F(v)_bound}
\end{eqnarray}

Consequently,
since all choices are independent and $2/d(v)^{0.38}\leq 4/d(v)^{0.38}$, by~(\ref{A(v)_bound})
and the Chernoff Bound we obtain:
\begin{eqnarray}
{\rm \textbf{Pr}}\left(A_v|c_1(v)=c^*_1\right) &=&  {\rm \textbf{Pr}}\left(X_v > 8d(v)^{0.62}|c_1(v)=c^*_1\right)\nonumber\\
&\leq& {\rm \textbf{Pr}}\left({\rm BIN}\left(d(v),\frac{4}{d(v)^{0.38}}\right) > 8d(v)^{0.62}\right)\nonumber\\
&\leq& {\rm \textbf{Pr}}\left(\left|{\rm BIN}\left(d(v),\frac{4}{d(v)^{0.38}}\right) - 4d(v)^{0.62}\right| > 4d(v)^{0.62}\right)\nonumber\\
&<& 2 e^{-\frac{4d(v)^{0.62}}{3}}. \nonumber
\end{eqnarray}
By the total probability we thus obtain that:
\begin{equation}\label{A_v_bound}
{\rm \textbf{Pr}}\left(A_v\right) < 2 e^{-\frac{4d(v)^{0.62}}{3}}.
\end{equation}

Analogously, by~(\ref{B(v)_bound}) and (\ref{C(v)_bound_bis}),
\begin{equation}\label{B_v_C_v_bounds}
{\rm \textbf{Pr}}\left(B_v\right) < 2 e^{-\frac{4d(v)^{0.62}}{3}} ~~{\rm and}~~ {\rm \textbf{Pr}}\left(C_v\right) < 2 e^{-\frac{4d(v)^{0.62}}{3}}.
\end{equation}

Finally, again by the Chernoff Bound and~(\ref{F(v)_bound}):
\begin{eqnarray}
{\rm \textbf{Pr}}\left(F_v|c_1(v)=c^*_1 \wedge c_2(v)=c^*_2\right) &=&  {\rm \textbf{Pr}}\left(T_v > 12d(v)^{0.24} |c_1(v)=c^*_1 \wedge c_2(v)=c^*_2\right)\nonumber\\
&\leq& {\rm \textbf{Pr}}\left({\rm BIN}\left(d(v),\frac{8}{d(v)^{0.76}}\right) > 12d(v)^{0.24}\right)\nonumber\\
&\leq& {\rm \textbf{Pr}}\left(\left|{\rm BIN}\left(d(v),\frac{8}{d(v)^{0.76}}\right) - 8d(v)^{0.24}\right| > 4d(v)^{0.24}\right)\nonumber\\
&<& 2 e^{-\frac{2d(v)^{0.24}}{3}},\nonumber
\end{eqnarray}
and hence, by the total probability,
\begin{equation}\label{F_v_bound}
{\rm \textbf{Pr}}\left(F_v\right) < 2 e^{-\frac{2d(v)^{0.24}}{3}}.
\end{equation}

Note that for every vertex $v\in V$, the events
$A_v$, $B_v$, $C_v$ and $F_v$ depend only on the
random choices for $v$ and its adjacent vertices $u$ with
$1/\beta d(v)< d(u)< \beta d(v)$.
Thus each of these events (corresponding to the vertex $v$)
is mutually independent of all
events except (possibly) for
these corresponding to the vertex $v$ itself, those corresponding to the neighbours $v'$ of $v$ with $1/\beta d(v)< d(v')< \beta d(v)$
and those corresponding to the neighbours $v''$ of such $v'$ for which
$1/\beta d(v')< d(v'')< \beta d(v')$.
In order to construct a dependency digraph $D$ necessary to apply Theorem~\ref{LLL-general},
from each of the events $A_v$, $B_v$, $C_v$, $F_v$,
we draw arrows pointing at all other events
corresponding to the vertices $w$ ($w=v$ or $w=v'$ or $w=v''$) with the properties described above, for $v\in V$.
For any event $L$ corresponding to a vertex $v$ of degree $d$ in $G$ (i.e., $L=A_v$, $L=B_v$, $L=C_v$ or $L=F_v$),
we then set
\begin{equation}\label{auxiliary_for_LLL_0}
x_L=\frac{1}{1+d^3}.
\end{equation}
By our construction, for every such $L$,
\begin{equation}\label{auxiliary_for_LLL_1}
d^+_D(L)\leq 3+4d+4d(\lfloor \beta d\rfloor-1) = 3+4d\lfloor \beta d\rfloor,
\end{equation}
where $d^+_D(L)$ is the out-degree of $L$ in $D$.
Moreover, if $L\rightarrow Q$ in $D$, then $Q$ is an event corresponding to a vertex $w$
with
\begin{equation}\label{auxiliary_for_LLL_2}
\frac{1}{\beta^2} d< d(w)< \beta^2 d.
\end{equation}
By (\ref{auxiliary_for_LLL_0}), (\ref{auxiliary_for_LLL_1}) and (\ref{auxiliary_for_LLL_2}),
since $\frac{x}{1+x}>e^{-\frac{1}{x}}$ for $x>0$, we thus obtain:
\begin{eqnarray}
x_{L} \prod_{Q\leftarrow L} (1-x_Q) &=& \left[x_L\cdot\frac{1}{1-x_L}\right]\cdot\left[(1-x_L)\prod_{Q\leftarrow L} (1-x_Q)\right]\nonumber\\
&>& \left[\frac{1}{1+d^3}\cdot\frac{1}{1-\frac{1}{1+d^3}}\right] \left[\left(1-\frac{1}{1+(\frac{d}{\beta^2})^3}\right)^{1+(3+4d\lfloor \beta d\rfloor)}\right]\nonumber\\
&\geq&\frac{1}{d^3} \left(\frac{(\frac{d}{\beta^2})^3}{1+(\frac{d}{\beta^2})^3}\right)^{25d^2}
> \frac{1}{d^3} e^{-\frac{1}{(\frac{d}{\beta^2})^3}25d^2}
\nonumber\\
&=& \left(2\frac{1}{2d^3}\right) \left(e^{-\frac{25\cdot \beta^6}{d}}\right) > \left(2 e^{-\frac{d^{0.24}}{3}}\right)
\left(e^{-\frac{d^{0.24}}{3}}\right) = 2 e^{-\frac{2d^{0.24}}{3}},\label{LLL_appl_main_ineq}
\end{eqnarray}
where $e^{-\frac{25\cdot \beta^6}{d}}\geq e^{-\frac{d^{0.24}}{3}}$ for $d\geq \left(3\cdot 25 \cdot \beta^6\right)^\frac{1}{1.24}\approx 221,460$,
while $\frac{1}{2d^3}>e^{-\frac{d^{0.24}}{3}}$ is equivalent to the inequality
$$f(d):=\frac{d^{0.24}}{3}-\ln(2d^3)>0,$$
which holds e.g. for $d\geq 10^{10}$, since $f(10^{10})\approx 14>0$ and
$$f'(d)=\frac{0.08}{d^{0.76}}-\frac{3}{d}>0$$
for $d>\left(\frac{3}{0.08}\right)^\frac{1}{0.24}\approx 3,617,959$.

By (\ref{A_v_bound}), (\ref{B_v_C_v_bounds}), (\ref{F_v_bound}), (\ref{LLL_appl_main_ineq}) and the Local Lemma we thus obtain that
$${\rm \textbf{Pr}}\left(\bigcap_{v\in V}\overline{A_v}\cap \overline{B_v}\cap \overline{C_v}\cap \overline{F_v}\right)>0.~~\Box$$
\end{proof}

\subsection{Construction}
Suppose then that we have chosen the labelings $c_1$ and $c_2$ so that (\ref{ABC_ineq}) and (\ref{F_ineq}) hold for every $v\in V$.
We shall use twice the fact that
\begin{equation}\label{double_use_ineq}
\frac{d}{3}-16d^{0.62}>72d^{0.76}
\end{equation}
for $d\geq 10^{10}$. Indeed, if only $d> 16^{\frac{1}{0.14}}\approx 398,893,555$, then the left hand side of inequality~(\ref{double_use_ineq}) is greater than
$\frac{d}{3}-d^{0.76}$, what in turn is greater than the right hand side of inequality~(\ref{double_use_ineq})
if only $d>(3\cdot 73)^\frac{1}{0.24}\approx 5,647,425,084$.

Let us temporarily remove from $G$ all risky edges
of type 1
and denote the graph obtained by $G'$. By~(\ref{ABC_ineq}) and~(\ref{double_use_ineq}), for every vertex $v\in V$,
\begin{eqnarray}
d_{G'}(v)&\geq& d(v)-8d(v)^{0.62} \geq 72\cdot d(v)^{0.76}\label{G'/6}\\
&=& 72\cdot 4^{\log_\beta d(v)} \geq  6\cdot 3\cdot 4^{\lceil\log_\beta d(v)\rceil}.\nonumber
\end{eqnarray}
By (\ref{G'/6}) and  Corollary~\ref{1_6Lemma}, we may thus find a subgraph $H_1$ of $G'$ such that
$d_{H_1}(v)$ has one of the two
remainders modulo $3\cdot 4^{\lceil\log_\beta d(v)\rceil}$, namely:
\begin{equation}\label{dH1v}
d_{H_1}(v)\equiv 3\cdot 2^{\lceil\log_\beta d(v)\rceil}c_1(v),3\cdot 2^{\lceil\log_\beta d(v)\rceil}c_1(v)+1 \pmod {3\cdot 4^{\lceil\log_\beta d(v)\rceil}}
\end{equation}
for every $v\in V$, and (by~(\ref{G'/6})):
\begin{equation}\label{degrees_in_H_1}
d_{H_1}(v) \in \left[\frac{d_{G'}(v)}{3},\frac{2d_{G'}(v)}{3}\right]\subset
\left[\frac{d(v)-8d(v)^{0.62}}{3},\frac{2d(v)}{3}\right].
\end{equation}
We paint the edges of $H_1$ with colour $1$, and claim that $H_1$ is locally irregular.
Consider an edge $uv\in E(H_1)$. By our construction, $uv\notin R_1$.
Condition~(\ref{dH1v}) implies that
either $d_{H_1}(u)$ or $d_{H_1}(u)-1$ must be a multiplicity of
$\min\{3\cdot 2^{\lceil\log_\beta d(u)\rceil},3\cdot 2^{\lceil\log_\beta d(v)\rceil}\}$,
and similarly, either $d_{H_1}(v)$ or $d_{H_1}(v)-1$ is a multiplicity of the same quantity.
If $1/\beta <d(u)<\beta d(v)$, then
these multiplicities cannot however be the same, since otherwise $uv$ would have to be a risky edge of type 1. Therefore,
$d_{H_1}(u)\neq d_{H_1}(v)$ in such a case.
The same holds also for the (remaining) edges with a greater spread between the degrees of the end-vertices.
We shall exhibit that in the next subsection.

Denote by $G_1$ the graph obtained from $G$ by removing all (already painted) edges of $H_1$.
Let us (again temporarily) remove from $G_1$ all risky edges $e\in R_2\cup R_3$ of types 2 and 3,
and denote the graph obtained by $G''$. By (\ref{ABC_ineq}), (\ref{double_use_ineq}) and (\ref{degrees_in_H_1}),
for every vertex $v\in V$,
\begin{eqnarray}
d_{G''}(v)&\geq& \frac{d(v)}{3} -
16d(v)^{0.62} \geq 72\cdot d(v)^{0.76}\label{G''/6}\\
&=& 72\cdot 4^{\log_\beta d(v)}  \geq 6\cdot 3\cdot 4^{\lceil\log_\beta d(v)\rceil}.\nonumber
\end{eqnarray}

Let $C$ be the subgraph induced by these edges
of $G_1$ which belong to $R_3$,
hence $C$ and $G''$ are edge-disjoint.
For every $v\in V$, denote by
\begin{equation}\label{c_v}
c_v:=d_{C}(v)=|C(v)\cap N_{G_1}(v)|
\end{equation}
the number of risky edges
of type 3 incident with $v$ in $G_1$.

Subsequently consider the subgraph $F$ induced by these edges
of $G_1$ which belong to $R_2\cap R_3$.
Note that $F\subset C$.
By~(\ref{F_ineq}), for every vertex $v\in V$,
$$d_{F}(v)\leq  12d(v)^{0.24} < \frac{1}{2}d(v)^{0.38},$$
where the second inequality holds since $d(v)>24^\frac{1}{0.14}\approx 7,221,904,256$, and thus
$$d_{F}(v)<  \frac{1}{2}d(v)^{\frac{1}{\log_2 \beta}} = 2^{\log_\beta d(v)-1} \leq 2^{\lceil\log_\beta d(v)\rceil-1}.$$
Since the left-hand-side and the right-hand-side of the inequality above are both integers,
we in fact obtain that $d_{F}(v)\leq 2^{\lceil\log_\beta d(v)\rceil-1}-1$.
Hence, we may greedily find a \emph{proper} vertex colouring
$$h:V\to\{0,1,\ldots, 2^{\lceil\log_\beta \Delta(G)\rceil-1}-1\}$$
of $F$ so that
\begin{equation}\label{h(v)_bound}
h(v)\leq 2^{\lceil\log_\beta d(v)\rceil-1}-1
\end{equation}
for every $v\in V$.

By (\ref{G''/6}) and Corollary~\ref{1_6Lemma},
we then may find a subgraph $H_2$ of $G''$ such that
\begin{eqnarray}
d_{H_2}(v)&\equiv& 3\cdot 2^{\lceil\log_\beta d(v)\rceil} c_2(v)+3h(v)-c_v,\nonumber\\
&&3\cdot 2^{\lceil\log_\beta d(v)\rceil} c_2(v)+3h(v)-c_v+1\pmod {3\cdot 4^{\lceil\log_\beta d(v)\rceil}} \label{dH2v}
\end{eqnarray}
for every $v\in V$, and (by~(\ref{ABC_ineq}))
\begin{eqnarray}
d_{H_2}(v) &\in& \left[\frac{d_{G''}(v)}{3},\frac{2d_{G''}(v)}{3}\right]\nonumber\\
&\subset& \left[\frac{d(v)-d_{H_1}(v)-16d(v)^{0.62}}{3},\frac{2(d(v)-d_{H_1}(v))}{3}\right].\label{degrees_in_H_2}
\end{eqnarray}
Then we denote $H'_2:=H_2\cup C$, $H'_3:=G-E(H_1\cup H'_2)$, and colour
the edges of the first of these graphs with $2$, while the edges of the second one with $3$.
Note that since $H'_3=G_1-E(H_2\cup C)$, then $H'_3$ contains no risky edges of type 3.

\subsection{Validity}
Since $H_2$ and $C$ are edge-disjoint, by~(\ref{c_v}) and~(\ref{dH2v}),
\begin{eqnarray}
d_{H'_2}(v)&\equiv& 3\cdot 2^{\lceil\log_\beta d(v)\rceil} c_2(v)+3h(v),\nonumber\\
&&3\cdot 2^{\lceil\log_\beta d(v)\rceil} c_2(v)+3h(v)+1\pmod {3\cdot 4^{\lceil\log_\beta d(v)\rceil}}\label{dH'2v}
\end{eqnarray}
for every $v\in V$.
Consider an edge $uv\in E(H'_2)$ with $1/\beta <d(u)<\beta d(v)$.
Then either $d_{H'_2}(u)-3h(u)$ or $d_{H'_2}(u)-3h(u)-1$ must be a multiplicity of
$\min\{3\cdot 2^{\lceil\log_\beta d(u)\rceil},3\cdot 2^{\lceil\log_\beta d(v)\rceil}\}\geq \max
\{3\cdot 2^{\lceil\log_\beta d(u)\rceil-1},3\cdot 2^{\lceil\log_\beta d(v)\rceil-1}\}$,
and similarly, either $d_{H'_2}(v)-3h(v)$ or $d_{H'_2}(v)-3h(v)-1$ is a multiplicity of the same quantity.
If $uv\notin R_2$, then analogously as above these multiplicities cannot be the same, hence because
by~(\ref{h(v)_bound}),
$0\leq 3h(u),3h(v)\leq \max
\{3\cdot 2^{\lceil\log_\beta d(u)\rceil-1},3\cdot 2^{\lceil\log_\beta d(v)\rceil-1}\}-3$,
we obtain that
$d_{H'_2}(u)\neq d_{H'_2}(v)$.
Otherwise, by our construction, $uv\in R_2\cap R_3$ is an edge of $F$, and hence
$d_{H'_2}(u)\neq d_{H'_2}(v)$ by the `properness' of $h$, since $0\leq 3h(u),3h(v)\leq \max
\{3\cdot 2^{\lceil\log_\beta d(u)\rceil-1},3\cdot 2^{\lceil\log_\beta d(v)\rceil-1}\}-3$.

By~(\ref{dH1v}) and~(\ref{dH'2v}),
\begin{eqnarray}
d_{H'_3}(v)&\equiv& d(v)-3\cdot 2^{\lceil\log_\beta d(v)\rceil} (c_1(v)+c_2(v))-3h(v),\nonumber\\
&&d(v)-3\cdot 2^{\lceil\log_\beta d(v)\rceil} (c_1(v)+c_2(v))-3h(v)-1,\nonumber\\
&&d(v)-3\cdot 2^{\lceil\log_\beta d(v)\rceil} (c_1(v)+c_2(v))-3h(v)-2\pmod {3\cdot 4^{\lceil\log_\beta d(v)\rceil}}\label{dH1v+dH'2v}
\end{eqnarray}
for every $v\in V$.
Then, similarly as above, $d_{H'_3}(u)\neq d_{H'_3}(v)$ for every edge $uv$ of $H'_3$ with $1/\beta d(v)<d(u)<\beta d(v)$,
since then $uv\notin R_3$ by our construction.

As for the remaining edges of $H_1$, $H'_2$ and $H'_3$, let us first note that
for every vertex $v\in V$, by~(\ref{degrees_in_H_1}),
\begin{equation}\label{degrees_in_H_1_bis}
d_{H_1}(v)\in
\left[\frac{1}{3}d(v)-\frac{8}{3}d(v)^{0.62},\frac{2}{3}d(v)\right],
\end{equation}
hence, by~(\ref{ABC_ineq}), (\ref{degrees_in_H_2}), (\ref{degrees_in_H_1_bis}) and our construction,
\begin{eqnarray}
d_{H'_2}(v)&\in&
\left[\frac{1}{3}d(v)-\frac{1}{3}d_{H_1}(v)-\frac{16}{3}d(v)^{0.62},
\frac{2}{3}d(v)-\frac{2}{3}d_{H_1}(v)+8d(v)^{0.62}\right]\label{degrees_in_H_2_bis2}\\
&\subset& \left[\frac{1}{3}d(v)-\frac{1}{3}\cdot\frac{2}{3}d(v)-\frac{16}{3}d(v)^{0.62},\right.\nonumber\\ & & \left.
\frac{2}{3}d(v)-\frac{2}{3}\left(\frac{1}{3}d(v)-\frac{8}{3}d(v)^{0.62}\right)+8d(v)^{0.62}\right]\nonumber\\
&=&\left[\frac{1}{9}d(v)-\frac{16}{3}d(v)^{0.62}, \frac{4}{9}d(v)+\frac{88}{9}d(v)^{0.62}\right],\label{degrees_in_H_2_bis}
\end{eqnarray}
and thus, by (\ref{degrees_in_H_1_bis}) and (\ref{degrees_in_H_2_bis2}),
\begin{eqnarray}
d_{H'_3}(v)&\in&\left[d(v)-d_{H_1}(v)-\left(\frac{2}{3}d(v)-\frac{2}{3}d_{H_1}(v)+8d(v)^{0.62}\right)\right.,\nonumber\\
& & \left.d(v)-d_{H_1}(v)-\left(\frac{1}{3}d(v)-\frac{1}{3}d_{H_1}(v)-\frac{16}{3}d(v)^{0.62}\right)\right]\nonumber\\
&=& \left[\frac{1}{3}d(v)-\frac{1}{3}d_{H_1}(v)-8d(v)^{0.62},
\frac{2}{3}d(v)-\frac{2}{3}d_{H_1}(v)+\frac{16}{3}d(v)^{0.62}\right]\nonumber\\
&\subset& \left[\frac{1}{3}d(v)-\frac{1}{3}\cdot \frac{2}{3}d(v)-8d(v)^{0.62},\right.\nonumber\\
&&\left.\frac{2}{3}d(v)-\frac{2}{3}\left(\frac{1}{3}d(v)-\frac{8}{3}d(v)^{0.62}\right)+\frac{16}{3}d(v)^{0.62}\right]\nonumber\\
&=&\left[\frac{1}{9}d(v)-8d(v)^{0.62},
\frac{4}{9}d(v)+\frac{64}{9}d(v)^{0.62}\right].\label{degrees_in_H_3_bis}
\end{eqnarray}
Consequently, since $\frac{4}{9}d+\frac{88}{9}d^{0.62}\leq \frac{2}{3}d$ for $d\geq 44^{\frac{1}{0.38}}\approx 21,129$,
by (\ref{degrees_in_H_1_bis}), (\ref{degrees_in_H_2_bis}) and (\ref{degrees_in_H_3_bis}),
$$d_{H_1}(v),d_{H'_2}(v),d_{H'_3}(v)\in\left[\frac{1}{9}d(v)-8d(v)^{0.62},\frac{2}{3}d(v)\right],$$
and thus
$$d_{H_1}(v),d_{H'_2}(v),d_{H'_3}(v)\in\left[\frac{4}{37}d(v),\frac{2}{3}d(v)\right],$$
since $\frac{1}{9}d-8d^{0.62}\geq \frac{4}{37}d$ for $d\geq (8\cdot 333)^{\frac{1}{0.38}}\approx 1,034,102,857$.
Since $\frac{2}{3}/\frac{4}{37}< 6.17 <\beta$, this guarantees that if (without the loss of generality) $d(u)\geq \beta d(v)$
for some edge $uv\in E$, then $d_{H_1}(u)\neq d_{H_1}(v)$, $d_{H'_2}(u)\neq d_{H'_2}(v)$ and $d_{H'_3}(u)\neq d_{H'_3}(v)$.
All subgraphs $H_1$, $H'_2$ and $H'_3$ are thus locally irregular.
$\Box$

\section{Concluding remarks}
Note that Conjecture~\ref{BBPWConjecture1} still remains open.
It would be interesting to settle it at least for bipartite graphs.
\begin{problem}\label{JP_Question_1}
Can every connected bipartite graph which is not an odd length path be decomposed into three locally irregular subgraphs?
\end{problem}
Moreover, thus far it is not even known if any finite number of locally irregular subgraphs admitted is sufficient in general.
\begin{problem}\label{JP_Question_2}
Does there exist a constant $K$ such that
every connected graph which
does not belong to $\mathfrak{T}$ and is not an odd length path nor an odd length cycle can be decomposed into (at most) $K$ locally irregular subgraphs?
\end{problem}
This is not known in the case of bipartite graphs either.


\end{document}